\definecolor{linkred}{RGB}{227, 115, 131} %Indigo
\definecolor{linkblue}{RGB}{16, 78, 139}
	\titlespacing{\section}{0pt}{12pt}{0pt}
	\titlespacing{\subsection}{0pt}{6pt}{0pt}
\long\def\@footnotetext#1{% 
\H@@footnotetext{% 
\ifHy@nesting 
\hyper@@anchor{\@currentHref}{#1}% 
\else 
\Hy@raisedlink{\hyper@@anchor{\@currentHref}{\relax}}#1% 
\fi 
}}
\def\@footnotemark{% 
\leavevmode 
\ifhmode\edef\@x@sf{\the\spacefactor}\nobreak\fi 
\H@refstepcounter{Hfootnote}% 
\hyper@makecurrent{Hfootnote}% 
\hyper@linkstart{link}{\@currentHref}% 
\@makefnmark 
\hyper@linkend 
\ifhmode\spacefactor\@x@sf\fi 
\relax 
}% 
\renewcommand*\@footnotemark{% 
\leavevmode 
\ifhmode 
\edef\@x@sf{\the\spacefactor}% 
\FN@mf@check 
\nobreak 
\fi 
\H@refstepcounter{Hfootnote}% 
\hyper@makecurrent{Hfootnote}% 
\hyper@linkstart{link}{\@currentHref}% 
\@makefnmark 
\hyper@linkend 
\ifFN@pp@towrite 
\FN@pp@writetemp 
\FN@pp@towritefalse 
\fi 
\FN@mf@prepare 
\ifhmode\spacefactor\@x@sf\fi 
\relax% 
}% 
\theoremstyle{plain}
\newtheorem{theorem}{Theorem}[section]
\newtheorem{lemma}[theorem]{Lemma}
\theoremstyle{definition}
\newcommand{\area}{{\rm area}}
\newcommand{\arcsinh}{{\,\rm arcsinh}}
\long\def\symbolfootnote[#1]#2{\begingroup%
\def\thefootnote{\fnsymbol{footnote}}\footnote[#1]{#2}\endgroup}
\def\blfootnote{\xdef\@thefnmark{}\@footnotetext}
\begin{document}

{\Large \bfseries A shorter note on shorter pants}

{\large 
Hugo Parlier\symbolfootnote[1]{\small Supported by the Luxembourg National Research Fund OPEN grant O19/13865598.\\
{\em 2020 Mathematics Subject Classification:} Primary: 32G15. Secondary: 57K20, 30F60. \\
{\em Key words and phrases:} pants decompositions hyperbolic surfaces, moduli spaces, length bounds}
}

\vspace{0.3cm}
{\bf Abstract.}
This note is about variations on a theorem of Bers about short pants decompositions of surfaces. It contains a version for surfaces with boundary but also a slight improvement on the best known bound for closed surfaces.
\vspace{0.3cm}

\section{Introduction} \label{sec:intro}

A theorem of Bers asserts that any closed hyperbolic surface admits a short pants decomposition. More precisely, Bers exhibited the existence of a constant, that only depends on the topology of the surface, which bounds the length of the shortest pants decomposition of {\it any} hyperbolic surface with the given topology \cite{Bers1, Bers}. Quite a bit of effort has gone into quantifying these constants in terms of topology, including in more general cases such as surfaces with cusps or boundary and for Riemaniann surfaces \cite{Balacheff-Parlier,BPS,BuserBook,Buser-Seppala,Gendulphe,GPY}. 

The main goal of this note is to show the following.
\begin{theorem}\label{thm:intro}
Let $X$ be a hyperbolic surface, possibly with geodesic boundary, and of finite area. Then $X$ admits a pants decomposition where each curve is of length at most 
$$
\max\{ \ell(\partial X), \area(X)\}.
$$
\end{theorem}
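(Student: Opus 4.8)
The plan is to argue by induction on the topological complexity $\xi(X):=-\chi(X)=\area(X)/2\pi$, cutting $X$ at each stage along one carefully chosen essential simple closed geodesic (or a small packet of them) and invoking the statement for the simpler pieces. Set $B(X):=\max\{\ell(\partial X),\area(X)\}$. If one cuts $X$ along a curve $\gamma$ and obtains a surface $X_\gamma$ (possibly disconnected), then $\gamma$ together with short pants decompositions of the components of $X_\gamma$ yields a pants decomposition of $X$, so it suffices to find $\gamma$ with (i) $\ell(\gamma)\le B(X)$, and (ii) each component $W$ of $X_\gamma$ satisfying $B(W)\le B(X)$. The area half of (ii) is automatic when $\gamma$ is separating, since then $\area(W)<\area(X)$ on both pieces; when $X$ has genus one instead peels off a one-holed torus (a regular neighbourhood of a non-separating geodesic and a dual curve), whose separating boundary plays the role of $\gamma$ and which carves off a complexity-one piece. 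The boundary half of (ii) is the delicate point, since cutting along $\gamma$ raises the total boundary length by $2\ell(\gamma)$.

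The base cases $\xi(X)=1$ are a pair of pants — nothing to prove, each cuff having length at most $\ell(\partial X)$ — and a one-holed torus, where the classical trace/length identity for once-holed tori shows by direct computation that the shortest non-peripheral simple closed geodesic has length at most $\max\{\ell(\partial X),2\pi\}$. For the inductive step the curve is produced by an area/collar estimate in two regimes. If $\ell(\partial X)$ is small relative to $\area(X)$ — in particular if $X$ is closed, which is the case giving the improvement for closed surfaces — then $X$ has large inradius (otherwise the collar of $\partial X$, whose area at half-width $w$ is at least $\ell(\partial X)\sinh w$, would already exhaust $\area(X)$), so one can grow a geodesic ball about a deep point; when its area would exceed $\area(X)=2\pi\xi(X)$ the ball fails to embed, producing a geodesic loop of length less than $2\arccosh(1+\xi(X))$ and hence, after the usual straightening, a simple closed geodesic of length far below $\area(X)$, which can be chosen so as to split $X$ into admissible pieces. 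If $\ell(\partial X)$ is comparable to or larger than $\area(X)$, grow instead the collar of $\partial X$: it must self-overlap by half-width $\arcsinh(\area(X)/\ell(\partial X))$, giving a short essential orthogeodesic arc $\eta$ between boundary points, and one takes $\gamma$ to be a boundary component of a regular neighbourhood of $\partial X\cup\eta$, of length at most $\ell(\partial X)+2\ell(\eta)+O(1)$.

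The main obstacle is exactly property (ii) when $\ell(\partial X)$ dominates, i.e.\ $B(X)=\ell(\partial X)$: there cutting along any curve strictly increases the total boundary length, so one cannot simply recurse. The only way out is to make the cut balance the boundary: if $X_\gamma=W_1\sqcup W_2$ then $\ell(\partial W_i)=\ell(\partial X\cap W_i)+\ell(\gamma)$, and this remains at most $\ell(\partial X)$ for both $i$ precisely when $\ell(\gamma)\le\min_i\ell(\partial X\cap W_i)$; so $\gamma$ must be at once short and cut the long part of $\partial X$ off to one side together with enough of the rest. Producing a single geodesic with both features — and tuning all the constants so that the final bound is exactly $\max\{\ell(\partial X),\area(X)\}$ rather than that quantity inflated by an additive constant — is where the real work of the argument lies.
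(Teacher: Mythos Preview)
Your outline has the right skeleton (induction on complexity, growing the boundary collar to find a short orthogeodesic, cutting off a pair of pants), but it stops exactly at the point where the paper's proof begins, and the two ingredients that close the gap are both absent.

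First, your two-regime split is unnecessary and is what forces you into the awkward ball-growing argument and the one-holed-torus base case. The paper instead invokes the length expansion lemma: if $\ell(\partial X)<\area(X)$, one can deform $X$ so as to increase the boundary length to $\area(X)$ while strictly increasing every interior simple closed geodesic; an upper bound for the deformed surface is then automatically an upper bound for the original. So one may always assume $\ell(\partial X)\ge\area(X)$, and there is only one regime. Your ``small boundary'' analysis never reappears.

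Second, and this is the real gap you yourself flag in your last paragraph: you observe correctly that cutting along a curve raises total boundary length, and you then look for a $\gamma$ that ``balances'' the boundary between the two sides. That is not how the paper proceeds, and indeed your crude estimate $\ell(\gamma)\le \ell(\partial X)+2\ell(\eta)+O(1)$ is far too weak to make any balancing work with the exact constant. The point is a sharp hyperbolic-trigonometric fact about pants (Lemma~\ref{lem:pants}): once $\ell(\partial X)\ge\area(X)$, the collar self-intersects at half-width $r_0<\arcsinh(1)$, so the orthogeodesic $\eta$ has length at most $2\arcsinh(1)$; and in the pair of pants $P$ spanned by $\eta$ together with the one or two boundary components it meets, this forces the \emph{new} cuff $\gamma$ to be strictly shorter than the sum of the old cuffs of $P$ lying on $\partial X$. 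Hence removing $P$ gives $X'$ with $\ell(\partial X')<\ell(\partial X)$ outright --- the total boundary length goes \emph{down}, not up, and no balancing is needed. This single inequality is what turns your sketch into a proof and what delivers the constant exactly rather than up to additive error.
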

While the context is slightly different (here we allow boundary), the techniques are very close to \cite{ParlierShortPants}. The main novelty is that the proof has been simplified to its bare essentials, and in the case where the surface is closed, the above statement is a slight improvement on the best known bounds.

\section{Setup}
Our surfaces will be orientable, finite-type and hyperbolic. They will be either closed or with boundary geodesics, where we use the convention that a cusp is a boundary geodesic of length $0$. If $X$ is a hyperbolic surface, and $\gamma$ is a non-trivial homotopy class of closed curve on $X$, the quantity $\ell_X(\gamma)$ is the length of the unique shortest closed geodesic freely homotopic to $\gamma$. If the surface $X$ is implicit, this will just be written as $\ell(\gamma)$. 

We're interested in pants decompositions of surfaces, that is maximal collections of disjoint simple closed geodesics. They decompose a surface into pants (topologically three-holed spheres). The length of a pants decomposition is, by convention, the maximal length of the curves in the decomposition. A hyperbolic pair of pants has area $2\pi$, so a hyperbolic surface $X$ has area $\area(X)$ equal to $2 \pi$ times the number of pairs of pants needed to build it.

This note relies on two tools. The first is well-known and concerns surfaces with non-empty boundary. 
\begin{lemma}[Length expansion lemma]\label{lem:LE} Let $X$ be a finite-type hyperbolic surface with boundary geodesics of length $(\ell_1,\hdots,\ell_n)$ and let $\varepsilon>0$. Then there exists a hyperbolic surface $X' \cong X$ with boundary geodesics of length $(\ell_1+\varepsilon,\hdots,\ell_n)$ and such that any non-trivial simple closed curve $\gamma\subset \Sigma$ satisfies
$$
\ell_{X'}(\gamma)>\ell_{X}(\gamma).
$$
\end{lemma}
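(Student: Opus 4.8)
The plan is to realize $X$ as an embedded subsurface of a slightly larger hyperbolic surface and then pull back a suitable conformal (or, better, geometric) deformation. Concretely, I would attach a hyperbolic ``collar-like'' piece along the boundary geodesic $\gamma_1$ of length $\ell_1$: glue to $X$ a hyperbolic surface $Y$ (say a pair of pants, or a cylinder with a funnel) one of whose boundary components has length $\ell_1$, so that $X \cup Y$ is a hyperbolic surface $\hat X$ containing $X$ isometrically and containing $\gamma_1$ as an interior simple closed geodesic. The point of working inside $\hat X$ is that one then has room to perform a Fenchel--Nielsen twist/length deformation supported near $\gamma_1$.

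The main step is to produce the deformation that strictly increases $\ell_1$ while strictly increasing the length of \emph{every} non-trivial simple closed curve of $X$. I would do this by increasing the length parameter of $\gamma_1$ in a Fenchel--Nielsen coordinate system for $\hat X$ adapted to a pants decomposition that uses $\gamma_1$ as one of its curves, keeping all other length and twist parameters fixed. The key geometric input is the standard fact (Kerckhoff, or Wolpert's derivative formula for geodesic length along earthquake/length-flow) that increasing a single Fenchel--Nielsen length coordinate does not decrease the length of any measured lamination, and strictly increases the length of any curve that crosses $\gamma_1$; curves disjoint from $\gamma_1$ must be handled separately. For those, I would instead use a global argument: a curve $\gamma \subset X$ disjoint from $\gamma_1$ lies in $X$, and one checks that lengthening $\gamma_1$ forces the ``pair of pants'' or subsurface on the $X$-side bounded in part by $\gamma_1$ to change so that $\gamma$ is also lengthened — or, more cleanly, one observes that the set of simple closed geodesics of $X$ disjoint from $\gamma_1$ is finite up to the relevant symmetry only after cutting, so one can alternatively first do a tiny length-increase of $\gamma_1$ and separately note strictness by an open-density argument. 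The cleanest route is probably: every non-peripheral simple closed curve $\gamma$ of $X$ either crosses $\gamma_1$ (handled by Wolpert/Kerckhoff) or is disjoint from it, in which case $\gamma$ survives as a curve of $X'$ and one must show its length strictly grew; here I would invoke that the hexagon/pants identities make the length of any curve in a component adjacent to $\gamma_1$ a strictly increasing function of $\ell_1$ when the other boundary lengths and internal structure are held fixed.

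I expect the real obstacle to be exactly this second case — getting \emph{strict} monotonicity for curves disjoint from $\gamma_1$ — since the naive Fenchel--Nielsen length deformation only obviously affects curves that intersect $\gamma_1$. The fix I would reach for is to not hold ``internal structure fixed'' but rather to choose the deformation more cleverly: perform the length increase of $\gamma_1$ \emph{together with} a compensating family of deformations spread over a pants decomposition of $\hat X$ so that, by the right-angled-hexagon trigonometric identities, the seams of every pants piece of $X$ grow. Alternatively, and perhaps most robustly, I would prove the statement for the single boundary curve $\gamma_1$ first in the model case where $X$ is a pair of pants (pure hexagon trigonometry: increasing one side length of a right-angled hexagon with the other two ``boundary'' sides fixed strictly increases the two seam lengths), then bootstrap to general $X$ by decomposing into pants; continuity and compactness of the (finite) set of relevant simple closed curves in any fixed pants decomposition then upgrade non-strict to strict inequality after possibly shrinking $\varepsilon$. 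Finally, iterating over the boundary components one at a time — though the statement as phrased only changes $\ell_1$, so a single application suffices — completes the argument.
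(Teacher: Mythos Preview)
The paper does not prove this lemma; it quotes it as a known tool, with references to Thurston, a direct proof in \cite{ParlierLengths}, and a stronger version in \cite{Papadopoulos-Theret}. So there is no in-paper argument to compare against.

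Your proposal identifies the difficulty but does not close it, and the first route rests on a misattribution. Kerckhoff's convexity and Wolpert's cosine formula concern \emph{twist} (earthquake) flows, not Fenchel--Nielsen \emph{length} flows; there is no general statement that increasing one cuff length makes all geodesic lengths nondecreasing---already in a single pair of pants, increasing one cuff while fixing the other two strictly \emph{decreases} the two adjacent seam lengths. More decisively, under the length flow along $\gamma_1$ in your enlarged $\hat X$, every pair of pants not adjacent to $\gamma_1$ is isometrically unchanged, so any simple closed geodesic of $X$ lying in their union has exactly the same length afterwards: the inequality fails outright, not merely up to strictness. Your fallback via hexagon trigonometry and a pants bootstrap has the same defect as written (holding the other cuffs of $P_1$ fixed leaves $X\setminus P_1$ untouched), and the appeal to ``compactness of the (finite) set of relevant simple closed curves'' cannot repair it, since there are infinitely many such curves and cuff lengths alone do not determine their lengths. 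What the cited proofs supply, and what is missing here, is a mechanism that spreads the lengthening across all of $X$ while altering only $\ell_1$: either a propagation of length increases to the \emph{interior} pants curves through the dual graph of a pants decomposition, arranged so that every pair of pants (hence every traversing arc, hence every closed geodesic) strictly expands, as in \cite{ParlierLengths}; or a strip deformation along a system of arcs all of whose endpoints lie on $\gamma_1$ and which together meet every essential simple closed curve, in the spirit of Thurston and \cite{Papadopoulos-Theret}.
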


This result, claimed in \cite{ThurstonSpine}, continues to be used in various forms (see for example \cite{Danciger-Gueritaud-Kassel}, \cite{ParlierLengths} for a direct proof and \cite{Papadopoulos-Theret} for a stronger version).

The second tool is already used in \cite{ParlierShortPants} and we state it here in the form of a lemma about pants. We provide a short proof idea for convenience.

\begin{lemma}\label{lem:pants} Let $Y$ be a hyperbolic pair of pants with geodesic boundary curves $\alpha, \beta, \gamma$ and geodesic seams $c$ and $h$ as in Figure \ref{fig:pants}. If $\ell(c) \leq 2 \arcsinh(1)$ then $\ell(\alpha) + \ell(\beta) > \ell(\gamma)$. If however $\ell(h) \leq 2 \arcsinh(1)$ then $\ell(\alpha) + \ell(\beta) < \ell(\gamma)$. 
\end{lemma}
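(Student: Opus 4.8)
The proof is a computation in the trigonometry of a hyperbolic pair of pants, whose geometry is entirely determined by $\ell(\alpha),\ell(\beta),\ell(\gamma)$. Cutting $Y$ along its three seams produces two isometric right-angled hexagons, three of whose sides have lengths $\ell(\alpha)/2,\ell(\beta)/2,\ell(\gamma)/2$ and the other three of which are the seams. The plan is to use the hexagon (resp.\ pentagon) law of cosines to turn the hypothesis $\ell(c)\le 2\arcsinh(1)$ (resp.\ $\ell(h)\le 2\arcsinh(1)$) into the desired comparison between $\ell(\gamma)$ and $\ell(\alpha)+\ell(\beta)$: heuristically, $c$ short forces $\alpha$ and $\beta$ close together, so $\gamma$ is not much longer than $\ell(\alpha)+\ell(\beta)$, while $h$ short forces $\gamma$ itself to be long.

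For the first statement, the seam $c$ joining $\alpha$ and $\beta$ is the side opposite $\ell(\gamma)/2$ in each hexagon, so it satisfies
\[
\cosh\frac{\ell(\gamma)}{2}=\sinh\frac{\ell(\alpha)}{2}\,\sinh\frac{\ell(\beta)}{2}\,\cosh\ell(c)-\cosh\frac{\ell(\alpha)}{2}\,\cosh\frac{\ell(\beta)}{2}.
\]
Since $\cosh$ is increasing on $[0,\infty)$, the inequality $\ell(\alpha)+\ell(\beta)>\ell(\gamma)$ is equivalent to $\cosh\frac{\ell(\gamma)}{2}<\cosh\frac{\ell(\alpha)+\ell(\beta)}{2}$, that is, using $\cosh\frac{\ell(\alpha)+\ell(\beta)}{2}=\cosh\tfrac{\ell(\alpha)}{2}\cosh\tfrac{\ell(\beta)}{2}+\sinh\tfrac{\ell(\alpha)}{2}\sinh\tfrac{\ell(\beta)}{2}$, to
\[
\sinh\tfrac{\ell(\alpha)}{2}\sinh\tfrac{\ell(\beta)}{2}\bigl(\cosh\ell(c)-1\bigr)<2\cosh\tfrac{\ell(\alpha)}{2}\cosh\tfrac{\ell(\beta)}{2}.
\]
As $\cosh x\cosh y>\sinh x\sinh y$ for all $x,y$, it is enough that $\cosh\ell(c)-1\le 2$, i.e.\ $\ell(c)\le\arccosh 3=2\arcsinh(1)$, which is the hypothesis.

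For the second statement, $h$ (as in Figure \ref{fig:pants}) is the interior orthogeodesic from $\gamma$ to itself separating $\alpha$ from $\beta$; it crosses the seam $c$ orthogonally at its midpoint, so each half of $h$ is the common perpendicular between the opposite sides $\ell(\gamma)/2$ and $\ell(c)$ of one of the hexagons. Splitting that hexagon along this perpendicular into two right-angled pentagons and applying the pentagon identity ($\cosh$ of a side equals the product of the hyperbolic sines of the two non-adjacent sides) yields, after simplification,
\[
\cosh^{2}\frac{\ell(h)}{2}=\frac{\cosh^{2}\frac{\ell(\alpha)}{2}+\cosh^{2}\frac{\ell(\beta)}{2}+\cosh^{2}\frac{\ell(\gamma)}{2}+2\cosh\frac{\ell(\alpha)}{2}\cosh\frac{\ell(\beta)}{2}\cosh\frac{\ell(\gamma)}{2}-1}{\sinh^{2}\frac{\ell(\gamma)}{2}}.
\]
Now $\ell(h)\le 2\arcsinh(1)$ means $\cosh\frac{\ell(h)}{2}\le\cosh(\arcsinh 1)=\sqrt2$; substituting and clearing the denominator rewrites this as
\[
\cosh^{2}\frac{\ell(\gamma)}{2}-2\cosh\tfrac{\ell(\alpha)}{2}\cosh\tfrac{\ell(\beta)}{2}\cosh\frac{\ell(\gamma)}{2}-\Bigl(\cosh^{2}\tfrac{\ell(\alpha)}{2}+\cosh^{2}\tfrac{\ell(\beta)}{2}+1\Bigr)\ge0.
\]
Reading the left side as a quadratic in $\cosh\frac{\ell(\gamma)}{2}$ (positive leading coefficient, discriminant $4\bigl(\cosh^{2}\tfrac{\ell(\alpha)}{2}+1\bigr)\bigl(\cosh^{2}\tfrac{\ell(\beta)}{2}+1\bigr)$) forces
\[
\cosh\frac{\ell(\gamma)}{2}\ge\cosh\tfrac{\ell(\alpha)}{2}\cosh\tfrac{\ell(\beta)}{2}+\sqrt{\bigl(\cosh^{2}\tfrac{\ell(\alpha)}{2}+1\bigr)\bigl(\cosh^{2}\tfrac{\ell(\beta)}{2}+1\bigr)}>\cosh\tfrac{\ell(\alpha)}{2}\cosh\tfrac{\ell(\beta)}{2}+\sinh\tfrac{\ell(\alpha)}{2}\sinh\tfrac{\ell(\beta)}{2}=\cosh\frac{\ell(\alpha)+\ell(\beta)}{2},
\]
and therefore $\ell(\gamma)>\ell(\alpha)+\ell(\beta)$.

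The only genuine difficulty is bookkeeping: pinning down the normalizations in the hexagon and pentagon identities — in particular that it is $\ell(h)/2$, not $\ell(h)$, that enters the second computation, which is precisely what lets the same constant $2\arcsinh(1)$ do both jobs — after which both implications drop out of the two elementary inequalities above.
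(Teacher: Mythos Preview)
Your argument is correct. For the first implication it is essentially identical to the paper's: both plug $\cosh\ell(c)\le 3$ into the hexagon law of cosines and finish with $\sinh\tfrac{\ell(\alpha)}{2}\sinh\tfrac{\ell(\beta)}{2}<\cosh\tfrac{\ell(\alpha)}{2}\cosh\tfrac{\ell(\beta)}{2}$.

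For the second implication you take a genuinely different route. The paper stays with the two raw pentagon identities
\[
\cosh\tfrac{\ell(\alpha)}{2}=\sinh x_\alpha\,\sinh\tfrac{\ell(h)}{2},\qquad
\cosh\tfrac{\ell(\beta)}{2}=\sinh x_\beta\,\sinh\tfrac{\ell(h)}{2},
\]
where $x_\alpha+x_\beta=\ell(\gamma)/2$; since $\sinh\tfrac{\ell(h)}{2}\le 1$ these give $\sinh x_\alpha\ge\cosh\tfrac{\ell(\alpha)}{2}>\sinh\tfrac{\ell(\alpha)}{2}$, hence $x_\alpha>\tfrac{\ell(\alpha)}{2}$, and similarly for $\beta$, so one addition finishes. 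You instead eliminate $x_\alpha,x_\beta$ to obtain the closed symmetric formula for $\cosh^2\tfrac{\ell(h)}{2}$ and then solve a quadratic in $\cosh\tfrac{\ell(\gamma)}{2}$. Both are valid; the paper's version is shorter and never leaves the pentagon, while yours produces an explicit identity relating $\ell(h)$ to the three boundary lengths (overkill for this lemma, but a nice byproduct). The key normalization you flag---that $\ell(h)/2$, not $\ell(h)$, is the pentagon side---is exactly what makes both arguments run on the same threshold $2\arcsinh(1)$.
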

\begin{figure}[h]
%\ShowGrid
\leavevmode \SetLabels
\L(.54*.29) $c$\\
\L(.453*.72) $h$\\
\endSetLabels
\begin{center}
\AffixLabels{\centerline{\epsfig{file=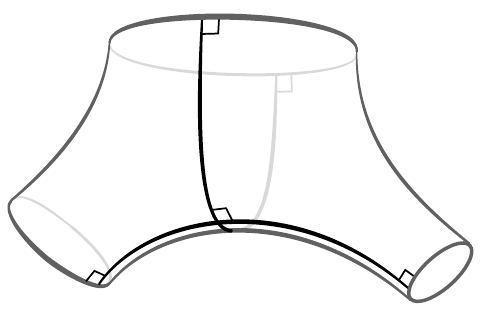,width=5.0cm,angle=0}}}
\vspace{-18pt}
\end{center}
\caption{The paths $c$ and $h$} \label{fig:pants}
\end{figure}

\begin{proof}
Both statements follow from standard trigonometry computations. We first consider the hexagon with non-adjacent sides of length $\ell(\alpha)/2, \ell(\beta)/2 $ and $\ell(\gamma)/2$, two copies of which form the pair of pants. In the first case, we can use $\ell(c) \leq 2 \arcsinh(1)$ and so $\cosh(\ell(c))\leq 3$ to obtain
\begin{eqnarray*}
\cosh \frac{\ell(\gamma)}{2}&=& \sinh \frac{\ell(\alpha)}{2}\sinh \frac{\ell(\beta)}{2} \cosh \ell(c) - \cosh \frac{\ell(\alpha)}{2}\cosh \frac{\ell(\beta)}{2}\\
&\leq &  \sinh \frac{\ell(\alpha)}{2}\sinh \frac{\ell(\beta)}{2} 3 - \cosh \frac{\ell(\alpha)}{2}\cosh \frac{\ell(\beta)}{2}\\
&<& \cosh\left(  \frac{\ell(\alpha)}{2}+ \frac{\ell(\beta)}{2} \right).
\end{eqnarray*}
The second statement follows from splitting the hexagon into two pentagons along the perpendicular path of length $\ell(h)/2$ between $c$ and $\gamma$ (see Figure \ref{fig:pent}).
\begin{figure}[h]
%\ShowGrid
\leavevmode \SetLabels
\L(.52*.48) $\sfrac{\ell(h)}{2}$\\
\L(.36*.6) $\sfrac{\ell(\alpha)}{2}$\\
\L(.58*.63) $\sfrac{\ell(\beta)}{2}$\\
\L(.46*.05) $x_\alpha$\\
\L(.57*.17) $x_\beta$\\
\endSetLabels
\begin{center}
\AffixLabels{\centerline{\epsfig{file=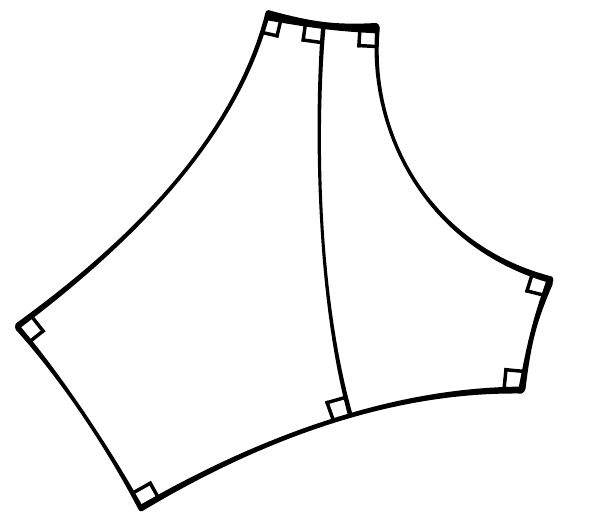,width=5.0cm,angle=0}}}
\vspace{-18pt}
\end{center}
\caption{The hexagon and pentagons in the second case}\label{fig:pent}
\end{figure}
Let $x_\alpha$ and $x_\beta$ be the lengths of the two subpaths of $\gamma$ as indicated in the figure. Now using $h \leq 2 \arcsinh(1)$ we have
\begin{eqnarray*}
\sinh x_\alpha &\leq  \sinh x_\alpha \sinh \frac{h}{2}  &= \cosh \frac{\ell(\alpha)}{2} \\
\sinh x_\beta &\leq  \sinh x_\beta \sinh \frac{h}{2}  &= \cosh \frac{\ell(\beta)}{2} 
\end{eqnarray*}
and thus
$$\frac{\ell(\gamma)}{2}= x_\alpha+x_\beta < \frac{\ell(\alpha) + \ell(\beta)}{2}$$
as claimed.
\end{proof}

\section{Proof of Theorem \ref{thm:intro}}
We begin with the case when $X$ is not closed. We argue by induction on the number of pairs of pants needed to construct $X$. The initial step of the induction, when $X$ is a pair of pants, holds by definition. 

Now if $\ell(\partial X) < \area(X)$, then by Lemma \ref{lem:LE} we can increase the boundary length while increasing the length of all simple closed geodesics until the length is equal to $\area(X)$. As we are proving an upper bound on curve lengths, if the statement holds for the resulting surface, it will also hold for the initial surface. Hence we can suppose that $\ell(\partial X) \geq \area(X)$. 

For $r>0$, consider an $r$-neighborhood of $\partial X$. Provided $r$ is small enough, this neighborhood is embedded and has area $\sinh(r) \ell(\partial X) < \area(X)$. If we set $r_0$ to be the supremum of all values of $r$ where the neighborhood is embedded we have
$$
r_0 < \arcsinh\left( \frac{\area(X)}{\ell(\partial X)} \right) \leq \arcsinh(1).
$$
(Note the strict inequality still holds because the closed neighborhood cannot entirely cover the surface.) In this limit case, we have a non-trivial geodesic arc of length $2 r_0 \leq 2 \arcsinh(1)$ either between distinct boundary curvesor from one boundary curve to itself (see the left and right illustrations in Figure \ref{fig:topo}). 

\begin{figure}[h]
%\ShowGrid
\leavevmode \SetLabels
\L(.32*.25) $2r_0$\\
\L(.62*.65) $2r_0$\\
\endSetLabels
\begin{center}
\AffixLabels{\centerline{\epsfig{file=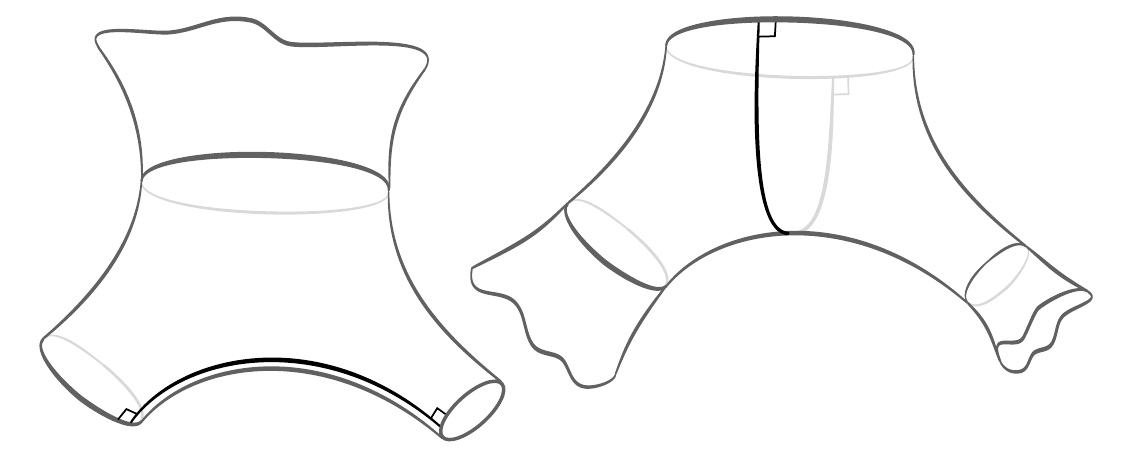,width=10.0cm,angle=0}}}
\vspace{-18pt}
\end{center}
\caption{The two topological types for the path} \label{fig:topo}
\end{figure}

In both cases, associated to this arc we have an embedded pair of pants which has either one or two curves belonging to $\partial X$. We can apply Lemma \ref{lem:pants} to this pair of pants, and remove it from $X$, to obtain a surface $X'$ with one less pair of pants and $\ell(\partial X') < \ell(\partial X)$. By induction, we are done.

We now have to prove the result when $X$ is closed. We will cut $X$ along a shortest simple closed geodesic (a systole) and then refer to the case of a surface with boundary to complete the systole into a full pants decomposition. 
\begin{lemma}\label{lem:systole}
Any closed hyperbolic surface $X$ has a systole of length strictly less than $\area(X)/2$.
\end{lemma}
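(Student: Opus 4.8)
The plan is to bound the systole length by comparing areas: the systole $\gamma$ of length $\ell$ has an embedded collar neighborhood of a definite width depending only on $\ell$, and the area of this collar must be at most $\area(X)$. The standard collar lemma gives an embedded collar of width $\arcsinh(1/\sinh(\ell/2))$ on each side of $\gamma$, hence an embedded annular neighborhood of area $2\ell \cdot \sinh\!\big(\arcsinh(1/\sinh(\ell/2))\big) = 2\ell/\sinh(\ell/2)$. However, I suspect the intended argument here is lighter: since $\gamma$ is a \emph{systole}, \emph{every} point of $\gamma$ has injectivity radius at least $\ell/2$ is false, but at least one can run a direct $r$-neighborhood argument as in the proof of Theorem~\ref{thm:intro} above. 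Concretely, take the half-collar (or full collar) and let $r_0$ be the supremal embedding radius; at the limit, an arc realizing a "first self-intersection" of the neighborhood produces a closed geodesic, and because $\gamma$ is shortest, that geodesic has length at least $\ell$. Feeding this into the trigonometric estimate bounds $r_0$ below in terms of $\ell$, and bounding the collar area above by $\area(X)$ then bounds $\ell$.

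Here is the cleaner route I would actually write. Let $\gamma$ be a systole of length $\ell$ and consider the $r$-neighborhood $N_r$ of $\gamma$. For $r$ small it is an embedded annulus of area $2\ell \sinh r$. Let $r_0$ be the supremum of radii for which $N_r$ is embedded; then $2\ell\sinh(r_0) \le \area(X)$, with strict inequality since $N_{r_0}$ cannot cover $X$ (a closed hyperbolic surface has no embedded geodesic disk of full area, and more simply the boundary of $N_{r_0}$ is nonempty). At $r = r_0$ the neighborhood fails to be embedded, which yields a geodesic arc of length $2r_0$ meeting $\gamma$ perpendicularly at both endpoints and otherwise disjoint from $\gamma$; together with a subarc of $\gamma$ this arc forms a piecewise-geodesic closed loop, which is homotopic to a closed geodesic. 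One checks this loop is essential and not homotopic to $\gamma$ itself — hence, by the systole property, it has length $\ge \ell$. A hyperbolic right-angled comparison (a quadrilateral with two right angles, or splitting into Lambert quadrilaterals) shows that a closed geodesic crossing $\gamma$ through a perpendicular arc of length $2r_0$ has length at least some increasing function of $r_0$; in particular $r_0$ cannot be too small, which combined with $2\ell \sinh(r_0) < \area(X)$ forces $\ell < \area(X)/2$.

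The main obstacle I anticipate is the bookkeeping at the limiting radius $r_0$: one must argue carefully that the failure of embeddedness genuinely produces a \emph{perpendicular} arc based at $\gamma$ (rather than a self-tangency of $\gamma$, which is excluded since $\gamma$ is simple), and that the resulting loop is neither trivial nor freely homotopic to $\gamma$ — otherwise one cannot invoke systolicity to get the lower bound $\ell$. A slick way to finesse this is to note that $N_{r_0}$ contains an embedded geodesic arc from $\gamma$ to itself of length exactly $2r_0 \le 2\arcsinh(1)$ \emph{whenever} $2\ell\sinh(r_0)<\area(X)$; if $r_0 \le \arcsinh(1)$ one is already done once the trigonometric constant is matched to $\area(X)/2$, and if $r_0 > \arcsinh(1)$ then $\area(X) > 2\ell \sinh(\arcsinh 1) = 2\ell$ directly. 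So the clean statement is: either way $\area(X) > 2\ell$, giving $\ell < \area(X)/2$. The remaining routine check is simply that the collar of width up to $\arcsinh(1)$ on one side of a \emph{simple} closed geodesic is embedded, which is classical.
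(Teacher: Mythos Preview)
Your overall strategy---embed a collar around the systole and compare its area to $\area(X)$---is exactly the paper's approach, but your execution has a genuine gap in the case split.

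The dichotomy on $r_0$ versus $\arcsinh(1)$ handles $r_0 > \arcsinh(1)$ correctly: then $\area(X) > 2\ell\sinh(r_0) > 2\ell$ and you are done. The problem is the other branch. You write that when $r_0 \le \arcsinh(1)$ ``one is already done once the trigonometric constant is matched'', which is not an argument, and then try to eliminate this branch altogether by asserting that the collar of width $\arcsinh(1)$ around a simple closed geodesic is always embedded. That assertion is \emph{not} classical and is false in general: the collar lemma only guarantees width $\arcsinh\!\big(1/\sinh(\ell/2)\big)$, which tends to $0$ as $\ell$ grows. Even for a systole the two standard lower bounds on $r_0$---namely $\ell/4$ from cut-and-paste and $\arcsinh(1/\sinh(\ell/2))$ from the collar lemma---do not combine to give $r_0 \ge \arcsinh(1)$ for all $\ell$ (e.g.\ take $\ell$ near $3$).

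The fix is already implicit in your second paragraph, where you note that the arc of length $2r_0$ together with a subarc of $\gamma$ gives an essential loop of length $\ge \ell$ by systolicity. Push that through: concatenating the arc with the shorter subarc of $\gamma$ gives a nontrivial loop of length at most $2r_0 + \ell/2$, so $2r_0 + \ell/2 \ge \ell$, i.e.\ $r_0 \ge \ell/4$. Hence $2\ell\sinh(\ell/4) < \area(X)$. Now split on $\ell$ rather than $r_0$: if $\ell \ge 4\arcsinh(1)$ then $\sinh(\ell/4) \ge 1$, so $2\ell < \area(X)$; if $\ell < 4\arcsinh(1) < 4 < 2\pi$, then since a closed hyperbolic surface has $\area(X) \ge 4\pi$ you get $\ell < \area(X)/2$ for free. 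This is exactly how the paper argues.
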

\begin{proof}
Let $\alpha$ be a systole and $s$ its length. By a standard cut and paste argument, the $\frac{s}{4}$ neighborhood of $\alpha$ is embedded (otherwise it is easy to construct a non-trivial curve of shorter length). The area of this neighborhood is 
$$
2\, s \sinh \frac{s}{4} < \area(X).
$$
Now if $s \geq 4 \arcsinh(1)$, the result holds. If not, $s  < 4 \arcsinh(1) < 4 < 2\pi$. And any closed surface $X$ has area at least $4\pi$ and so the result follows.
\end{proof}
The main result then follows by cutting along the systole to obtain a surface with boundary of length strictly less than $\area(X)$ (which may be possibly disconnected, but that only makes the result easier).

{\it Address:}\\
DMATH, FSTM, University of Luxembourg, Esch-sur-Alzette, Luxembourg

{\it Email:}\\
hugo.parlier@uni.lu

\end{document}